\documentclass[a4paper,12pt]{article}
\usepackage[top=2.5cm,bottom=2.5cm,left=2.5cm,right=2.5cm]{geometry}
\usepackage{cite,amsmath,amssymb}
    \usepackage[margin=1cm,%
                font=small,%
                format=hang,%
                labelsep=period,%
                labelfont=bf]{caption}
    \pagestyle{empty}

\usepackage[algoruled,linesnumbered]{algorithm2e}
\usepackage{algorithmic}

\usepackage{amsfonts,epsf,here,graphicx}
\usepackage{latexsym,multirow,rotating}
\usepackage{pgf,tikz,subfigure}
\usepackage{epstopdf}

\newtheorem{theorem}{\bf Theorem}[section]

\newtheorem{lemma}[theorem]{\bf Lemma}
\newtheorem{proposition}[theorem]{\bf Proposition}
\newtheorem{conjecture}[theorem]{\bf Conjecture}

\newcommand{\qed}{\hfill $\square$ \bigskip}

\newcommand{\aut}{\rm Aut}

\textheight = 24cm
\textwidth = 16cm

\begin{document}

\baselineskip=0.30in
\vspace*{40mm}

\begin{center}
{\LARGE \bf The Graovac-Pisanski index of armchair nanotubes}
\bigskip \bigskip

{\large \bf Niko Tratnik$^{a}$, Petra \v Zigert Pleter\v sek$^{a,b}$}

\smallskip
{\em  $^a$Faculty of Natural Sciences and Mathematics, University of Maribor,  Slovenia} \\
{\em  $^b$Faculty of Chemistry and Chemical Engineering, University of Maribor, Slovenia} \\

e-mail: {\tt niko.tratnik@gmail.com, \tt petra.zigert@um.si}

\bigskip\medskip

(\today)

\end{center}

\noindent
\begin{center} {\bf Abstract} \end{center}

\vspace{3mm}\noindent
The Graovac-Pisanski index, which is also called the modified Wiener index, considers the symmetries and the distances in molecular graphs. Carbon nanotubes are molecules made of carbon with a cylindrical structure possessing unusual valuable properties. In a mathematical model we can consider them as a subgraph of a hexagonal lattice embedded on a cylinder with some vertices being identified. In the present paper, we investigate the automorphisms and the orbits of armchair nanotubes and derive the closed formulas for their Graovac-Pisanski index. 

\baselineskip=0.30in

\noindent {\bf Key words:} modified Wiener index; Graovac-Pisanski index; armchair nanotube; carbon nanotube; graph distance; automorphism group


\section{Introduction}

Theoretical molecular descriptors are graph invariants that play an important role in chemistry, pharmaceutical sciences, etc. The most famous molecular descriptor is the Wiener index introduced in 1947 \cite{Wiener}. 

The Graovac-Pisanski index is a molecular descriptor that considers symmetries and distances in a graph. It measures how far the vertices of a graph are moved on the average by its automorphisms. The Graovac-Pisanski index was introduced by Graovac and Pisanski in 1991 \cite{graovac} under the name modified Wiener index. However, the name modified Wiener index was later used for different variations of the Wiener index \cite{gu-vu,li-li,ni-tr}. Therefore, we use the name Graovac-Pisanski index as suggested by Ghorbani and Klav\v zar in \cite{ghorbani}.

Carbon nanotubes are carbon compounds with a cylindrical structure, first observed in 1991  \cite{ii}. The extremely large ratio of length to diameter causes unusual properties of these molecules, which are valuable for nanotechnology, electronics, optics and other fields of materials science and technology. Carbon nanotubes can be open-ended or closed-ended. Open-ended single-walled carbon  nanotubes are also called tubulenes.

It was shown in \cite{ashrafi_koo_diu1} that the quotient of the Wiener index and the Graovac-Pisanski index is strongly correlated with the topological efficiency for some nanostructures. The topological efficiency was introduced in \cite{cataldo,ori} as a tool for the classification of the stability of molecules.

For recent studies on the Graovac-Pisanski index of some molecular graphs and nanostructures see also \cite{ashrafi_diu,ashrafi_koo_diu,ashrafi_sha,
koo_ashrafi3,koo_ashrafi,koo_ashrafi2,sha_ashrafi}.  Moreover, the Graovac-Pisanski index of zig-zag nanotubes was computed in \cite{tratnik}. We use similar ideas to compute this index for armchair nanotubes, but in some places our computation is more difficult and requires some additional insights.

In the present paper we first describe the automorphisms of armchair nanotubes and compute the orbits under the natural action of the automorphism group on the set of vertices of a graph. In the second part, the Graovac-Pisanski index for these nanotubes is computed. For this purpose, different cases according to the number of layers and the width of a nanotube are considered. Final results are then gathered in Table \ref{tabela5}.

\section{Preliminaries}

Unless stated otherwise, the graphs considered in this paper are finite and connected. The {\em distance} $d_G(x,y)$ between vertices $x$ and $y$ of a graph $G$ is the length of a shortest path between vertices $x$ and $y$ in $G$. We also write $d(x,y)$ for $d_G(x,y)$. Furthermore, if $S \subseteq V(G)$ and $x \in V(G)$, we define $d(x,S) = \sum_{y \in S}d(x,y)$.
\bigskip

\noindent
The {\em Wiener index} of a graph $G$ is defined as $\displaystyle{W(G) = \frac{1}{2} \sum_{u \in V(G)} \sum_{v \in V(G)} d_G(u,v)}$. Moreover, if $S \subseteq V(G)$, then $W(S) = \frac{1}{2} \sum_{u \in S} \sum_{v \in S} d_G(u,v)$.
\bigskip

\noindent  
An \textit{isomorphism of graphs} $G$ and $H$ with $|E(G)|=|E(H)|$ is a bijection $f$ between the vertex sets of $G$ and $H$, $f: V(G)\to V(H)$,
such that for any two vertices $u$ and $v$ of $G$ it holds that if $u$ and $v$ are adjacent in $G$ then $f(u)$ and $f(v)$ are adjacent in $H$. When $G$ and $H$ are the same graph, the function $f$ is called an \textit{automorphism} of $G$. The composition of two automorphisms is another automorphism, and the set of automorphisms of a given graph $G$, under the composition operation, forms a group ${\aut}(G)$, which is called the \textit{automorphism group} of the graph $G$.
\bigskip

\noindent 
\textit{The Graovac-Pisanski index} of a graph $G$, $\widehat{W}(G)$, is defined as
$$\widehat{W}(G) = \frac{|V(G)|}{2 |{\aut}(G)|} \sum_{u \in V(G)} \sum_{\alpha \in {\aut}(G)} d_G(u, \alpha(u)).$$

\noindent Next, we repeat some important concepts from group theory. If $G$ is a group and $X$ is a set, then a \textit{group action} $\phi$ of $G$ on $X$ is a function $\phi :G \times X \to X$
that satisfies the following: $\phi(e,x) = x$ for any $x \in X$ (here, $e$ is the neutral element of $G$) and $\phi(gh,x)=\phi(g,\phi(h,x))$ for all $g,h \in G$ and $x \in X$. The \textit{orbit} of an element $x$ in $X$ is the set of elements in $X$ to which $x$ can be moved by the elements of $G$, i.e. the set $\lbrace \phi(g,x) \, | \, g \in G \rbrace$. If $G$ is a graph and ${\aut}(G)$ the automorphism group, then $\phi: {\aut}(G) \times V(G) \to V(G)$, defined by $\phi(\alpha,u) = \alpha(u)$ for any $\alpha \in {\aut}(G)$, $u \in V(G)$, is called the \textit{natural action} of the group ${\aut}(G)$ on $V(G)$.

\noindent
It was shown in \cite{graovac} that if $V_1, \ldots, V_t$ are the orbits under the natural action of the group ${\aut}(G)$ on $V(G)$, then
\begin{equation}
\label{formula}
\widehat{W}(G) = |V(G)| \sum_{i=1}^t \frac{1}{|V_i|}W(V_i).
\end{equation}

\noindent
We also introduce $W'(G) = \sum_{i=1}^t W(V_i)$, which is the sum of the Wiener indices of orbits of $G$.
\bigskip

\noindent 
The \textit{dihedral group} $D_n$ is the group of symmetries of a regular polygon with $n$ sides. Therefore, the group $D_n$ has $2n$ elements. The \textit{cyclic group} $\mathbb{Z}_n$ is a group that is generated by a single element of order $n$. Given groups $G$ and $H$, the \textit{direct product} $G \times H$ is defined as follows. The underlying set is the Cartesian product $G \times H$ and the binary operation on $G \times H$ is defined component-wise: $(g_1,h_1)(g_2,h_2)=(g_1g_2,h_1h_2)$, $(g_1,h_1),(g_2,h_2) \in G \times H$.
\bigskip

\noindent
If $G$ and $H$ are groups, then a \textit{group isomorphism} is a bijective function $f: G \rightarrow H$ such that for all $u$ and $v$ in $G$ it holds $f(uv)=f(u)f(v)$.
\bigskip

Finally, we will formally define open-ended carbon nanotubes, also called tubulenes (see \cite{sa}). Choose any lattice point in the hexagonal lattice as the origin $O$. Let $\overrightarrow{a_1}$ and $\overrightarrow{a_2}$ be the two basic lattice vectors.
 Choose a vector $ \overrightarrow{OA} =n\overrightarrow{a_1}+m \overrightarrow{a_2}$
such that $n$ and $m$ are two integers and $|n|+|m|>1$, $nm\neq -1$. Draw two straight lines $L_1$ and $L_2$ passing through
$O$ and $A$ perpendicular to $O A$, respectively. By rolling up the hexagonal strip between $L_1$ and $L_2$ and gluing $L_1$ and $L_2$ such
that $A$ and $O$ superimpose, we can obtain a hexagonal tessellation $\mathcal{HT}$ of the cylinder. $L_1$ and $L_2$ indicate the direction of
the axis of the cylinder. Using the terminology of graph theory, a {\em tubulene} $T$ is defined to be the finite graph induced by all
the hexagons of $\mathcal{HT}$ that lie between $c_1$ and $c_2$, where $c_1$ and $c_2$ are two vertex-disjoint cycles of $\mathcal{HT}$ encircling the axis of
the cylinder.  The vector $\overrightarrow{OA}$ is called the {\em chiral vector} of $T$ and  the cycles $c_1$ and $c_2$ are the two open-ends of $T$. 

\noindent
For any  tubulene $T$, if its chiral vector is $ n \overrightarrow{a_1} + m \overrightarrow{a_2}$, $T$ will be called an $(n,m)$-type tubulene, see Figure \ref{nano}. If $T$ is a $(n,m)$-type tubulene where $n=m$, we call it an \textit{armchair tubulene}.

\begin{figure}[!h]
	\centering
		\includegraphics[scale=0.7, trim=0cm 0cm 1cm 0cm]{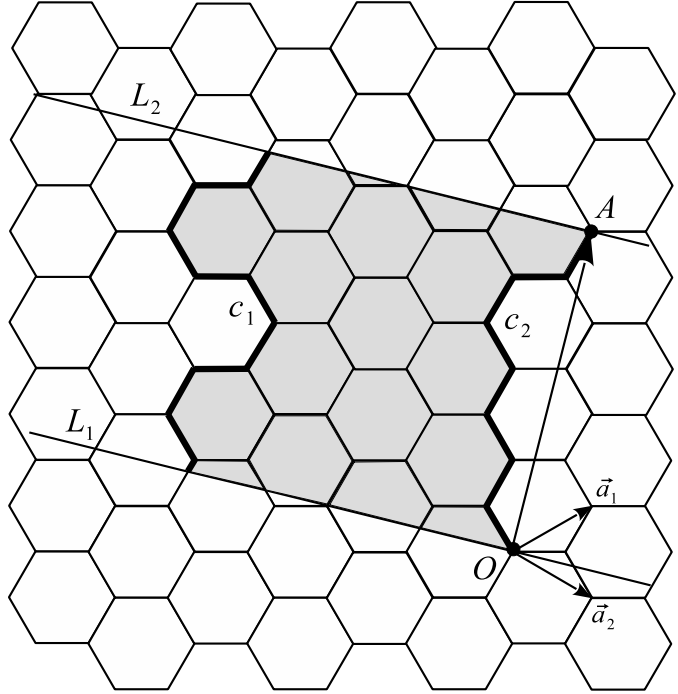}
\caption{Illustration of a $(4,-3)$-type tubulene.}
	\label{nano}
\end{figure}

\section{Armchair tubulenes and their automorphisms}
\label{sec:armchair}

 Let $T$ be an armchair tubulene such that $c_1$ and $c_2$ are the shortest possible cycles encircling the axis of the cylinder and such that there is the same number of hexagons in every column of hexagons (see Figure \ref{armchair}). If $T$ has $n$ vertical layers of hexagons, each containing exactly $p$ hexagons, then we denote it by $AT(n,p)$. Obviously, $n$ must be an even number. Note that $AT(n,p)$ is a $(\frac{n}{2},\frac{n}{2})$-type tubulene. We always assume that $n \geq 2$ and $p \geq 1$. Moreover, let $C_1$ and $C_2$ be subgraphs of $AT(n,p)$ induced by $c_1$ and $c_2$, respectively.

Obviously, $AT(n,p)$ has $p+1$ layers of vertices and every layer has two types of vertices, i.e. type $0$ and type $1$. In the figures the vertices of type $0$ always lie lower than the vertices of type $1$. The set of vertices of type $k$ in layer $i$ is denoted by $V^k_i$. Moreover, let the vertices in $V^k_i$ be denoted as follows: $V^k_i = \lbrace v^k_{i,0}, \ldots, v^k_{i,n-1} \rbrace$. See Figure \ref{armchair} for an example.

\begin{figure}[!htb]
	\centering
		\includegraphics[scale=0.8, trim=0cm 0cm 1cm 0cm]{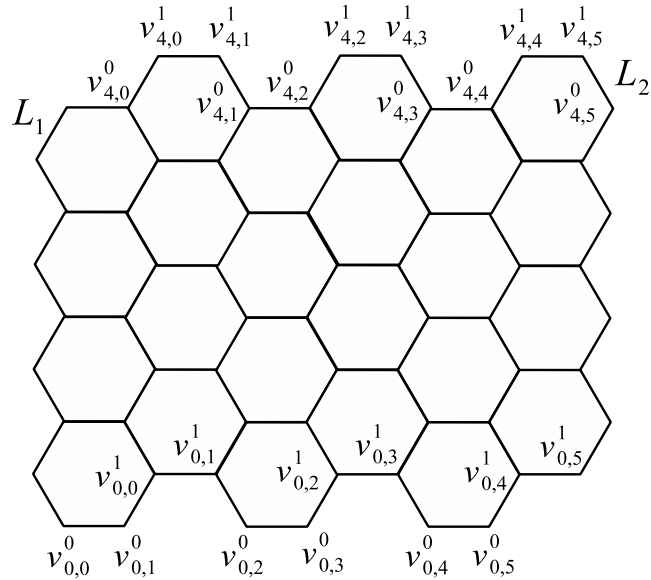}
\caption{Armchair tubulene $AT(6,4)$ with vertices in $V^0_0$, $V^1_0$, $V^0_4$, and $V^1_4$. Curves $L_1$ and $L_2$ are joined together.}
	\label{armchair}
\end{figure}


In this section, we determine the orbits under the natural action of the group ${\aut}(AT(n,p))$ on the set $V(AT(n,p))$. First, one lemma is needed.

\begin{lemma}
\label{le2}
Let $\varphi: V(C_1) \rightarrow V(C_i)$ be an isomorphism between subgraphs $C_1$ and $C_i$, where $i \in \lbrace 1,2 \rbrace$. Then there is exactly one automorphism $\overline{\varphi}: V(AT(n,p)) \rightarrow V(AT(n,p))$ such that $\varphi(x) = \overline{\varphi}(x)$ for any $x \in V(C_1)$.
\end{lemma}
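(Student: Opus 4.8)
The plan is to prove the two assertions — uniqueness and existence of $\overline{\varphi}$ — separately, in both cases exploiting the layered structure of $AT(n,p)$: for each $i$ the set $V^0_i\cup V^1_i$ induces a $2n$-cycle on which the two types alternate, consecutive layers are joined by a perfect matching between the type-$1$ vertices of layer $i$ and the type-$0$ vertices of layer $i+1$, the rotation $\rho\colon v^k_{i,j}\mapsto v^k_{i,j+1}$ (indices modulo $n$) is an automorphism, and the only degree-$2$ vertices are the type-$0$ vertices in layer $0$ and the type-$1$ vertices in layer $p$.

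\medskip\noindent\textbf{Uniqueness.} It suffices to show that the only automorphism $\gamma$ of $AT(n,p)$ fixing every vertex of $C_1$ is the identity: if $\overline{\varphi}_1,\overline{\varphi}_2$ were two extensions of $\varphi$, then $\overline{\varphi}_2^{-1}\circ\overline{\varphi}_1$ would fix $V(C_1)$ pointwise, hence be the identity, so $\overline{\varphi}_1=\overline{\varphi}_2$. To see such a $\gamma$ is the identity I would induct on the layer index; the base case is the hypothesis that $\gamma$ fixes $V^0_0\cup V^1_0=V(C_1)$ pointwise. Assume $\gamma$ fixes every vertex of layers $0,\dots,i$ with $i\le p-1$. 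Each $v^1_{i,j}$ has degree $3$, two of its neighbours lying in the cycle of layer $i$ (already fixed) and the third being the vertex $v^0_{i+1,j}$ of layer $i+1$; as $\gamma$ preserves adjacency and fixes the first two, it fixes $v^0_{i+1,j}$, so $\gamma$ fixes $V^0_{i+1}$ pointwise. Then each $v^1_{i+1,j}$ is the unique common neighbour of the two now-fixed vertices $v^0_{i+1,j}$ and $v^0_{i+1,j+1}$, so $\gamma$ fixes it as well, and layer $i+1$ is fixed. Running the induction to $i=p-1$ gives $\gamma=\Id$.

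\medskip\noindent\textbf{Existence.} Since automorphisms preserve degrees, $\varphi$ must carry the degree-$2$ vertices in $V(C_1)$ to the degree-$2$ vertices in $V(C_i)$ (an isomorphism that does not cannot extend), so $\varphi$ respects the partition into types — for $i=2$, the partition into types after the top-to-bottom reversal of layers. Hence $\varphi$ is the restriction to $C_1$ of an evident symmetry of $AT(n,p)$: a power of $\rho$, possibly composed with a reflection $v^k_{i,j}\mapsto v^k_{i,c-j}$ in a plane containing the axis and, when $i=2$, with the flip $v^k_{i,j}\mapsto v^{1-k}_{p-i,j}$ (suitably re-indexed); each such map is checked directly from the adjacency rules to be an automorphism, and it serves as $\overline{\varphi}$. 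Alternatively, in parallel with the uniqueness argument, one sets $\overline{\varphi}:=\varphi$ on $V(C_1)$ and propagates outward, the images of $V^0_{i+1}$ and $V^1_{i+1}$ being forced by those of layers $0,\dots,i$ exactly as above, and then checks that the resulting map is a well-defined bijection restricting to a cycle-automorphism of each layer and sending edges between consecutive layers to such edges.

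\medskip\noindent The step I expect to be the main obstacle, in both directions, is the one recovering the type-$1$ vertices $v^1_{i+1,j}$ of a new layer: it relies on two consecutive vertices of $V^0_{i+1}$ having a unique common neighbour, which is exactly where the fine local geometry of the armchair lattice (and the fact that the circumference is not too small) is genuinely used. For existence one must moreover check that the propagation is consistent across the seam where $L_1$ and $L_2$ are identified, i.e. that the index arithmetic modulo $n$ closes up correctly.
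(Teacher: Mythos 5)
The structural picture you build on is that of a zigzag tube, not an armchair one, and the step you yourself single out as the crux is exactly where this matters. In $AT(n,p)$ each layer does induce a cycle of length $2n$, but along this cycle the two types occur in adjacent pairs $0,0,1,1,0,0,\ldots$, not alternately; this is visible both in the paper's extension procedure (for $x\in V^1_1$ there is exactly one adjacent vertex in $V^0_1$) and in the computation of $d(u,V^0_0)$, where $u$ has a neighbour inside $V^0_0$. Consequently a vertex of $V^1_{i+1}$ has exactly one neighbour in $V^0_{i+1}$, and two vertices of $V^0_{i+1}$ never have a common neighbour at all (the layer is an induced cycle in which same-type vertices are never at distance two, and their neighbours in the layer below are distinct because the interlayer edges form a matching). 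So your claim that $v^1_{i+1,j}$ is the unique common neighbour of $v^0_{i+1,j}$ and $v^0_{i+1,j+1}$ is false, and the second half of your uniqueness induction breaks down --- precisely the step you flagged as relying on the local geometry; that geometry in fact refutes it. The repair is the same ``third neighbour'' device you used in the first half and which the paper applies at every stage: once $V^0_{i+1}$ is fixed pointwise, each $v\in V^0_{i+1}$ has two already-fixed neighbours (its type-$0$ partner in the same layer and its matched vertex in $V^1_i$), so its third neighbour, which lies in $V^1_{i+1}$, is forced, and every vertex of $V^1_{i+1}$ arises this way.

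The same misreading affects the existence half. With the vertices of $V^0_i$ labelled consecutively around a layer, consecutive vertices are alternately at distance $1$ and $3$, so the single-step shift $v^k_{i,j}\mapsto v^k_{i,j+1}$ does not preserve adjacency and is not an automorphism; the genuine rotations shift by two indices and form a group of order $\frac{n}{2}$, which is why the paper gets $D_{\frac{n}{2}}$ rather than a dihedral group of order $2n$. Hence the catalogue of ``evident symmetries'' against which you want to match $\varphi$ is wrong, and even with the correct catalogue, the assertion that every admissible $\varphi$ is the restriction of such a symmetry is essentially the lemma itself rather than something one may quote. Your fallback route --- propagating $\varphi$ outward layer by layer --- is exactly the paper's proof, but in your version the recovery of the type-$1$ vertices again invokes the false common-neighbour property, and the remaining verifications (that the forced images preserve the intralayer edges, including the type-$1$--type-$1$ edges, and assemble into a bijection) are deferred to ``one checks'', which is where the actual content lies. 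Finally, your parenthetical remark that an isomorphism not matching the degree-$2$ vertices ``cannot extend'' is correct but sits uneasily with the statement being proved, which asserts an extension for every isomorphism $C_1\to C_i$; like the paper's own proof (which silently uses that $\overline{\varphi}(y)$ has degree $3$), your argument really only treats the type-preserving isomorphisms, and that restriction should be made explicit rather than absorbed into a parenthesis.
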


\begin{proof}
Let $\varphi: V(C_1) \rightarrow V(C_i)$ be an isomorphism where $i \in \lbrace 1,2 \rbrace$. For any $x \in V(C_1) = V^0_0 \cup V^1_0$ we define $\overline{\varphi}(x) = \varphi(x)$. In the rest of the proof we will define function $\overline{\varphi}$ step by step such that every edge will be mapped to an edge and $\overline{\varphi}$ will be a bijection.

First let $x \in V^0_1$. Then there is exactly one $y \in V^1_0$ such that $x$ and $y$ are adjacent. Since the degree of $y$ is 3, let $y_1$ and $y_2$ be the other two neighbours of $y$ in $AT(n,p)$. Obviously, $\overline{\varphi}(y), \overline{\varphi}(y_1)$, and $\overline{\varphi}(y_2)$ are already defined and it holds that $\overline{\varphi}(y_1)$ and $\overline{\varphi}(y_2)$ are both adjacent to $\overline{\varphi}(y)$. Since the degree of $\overline{\varphi}(y)$ is 3, we define $\overline{\varphi}(x)$ to be the neighbour of $\overline{\varphi}(y)$, different from $\overline{\varphi}(y_1)$ and $\overline{\varphi}(y_2)$. This can be done for any $x \in V^0_1$.

Now let $x \in V^1_1$. Then there is exactly one vertex $y \in V^0_1$ such that $y$ is adjacent to $x$. Let $y_1$ and $y_2$ be the other two neighbours of $y$. It is easy to see that $\overline{\varphi}(y)$, $\overline{\varphi}(y_1)$, and $\overline{\varphi}(y_2)$ are already defined. Also, the degree of $\overline{\varphi}(y)$ is $3$. Therefore, we define $\overline{\varphi}(x)$ to be the neighbour of $\overline{\varphi}(y)$, different from $\overline{\varphi}(y_1)$ and $\overline{\varphi}(y_2)$. This can be done for any $x \in V^1_1$. 

With the procedure above we have defined function $\overline{\varphi}$ on the set of vertices $V^0_0 \cup V^1_0 \cup V^0_1 \cup V^1_1$ such that for any two adjacent vertices $x,y \in V^0_0 \cup V^1_0 \cup V^0_1 \cup V^1_1$, it holds that $\overline{\varphi}(x)$ and $\overline{\varphi}(y)$ are also adjacent. Using induction, we can define function $\overline{\varphi}$ on the set $V(AT(n,p))$ such that for any two adjacent vertices $x,y \in V(AT(n,p))$ it holds that $\overline{\varphi}(x)$ and $\overline{\varphi}(y)$ are adjacent. Since $\overline{\varphi}$ is also bijective, it is an automorphism of the graph $AT(n,p)$. It follows from the construction that $\overline{\varphi}$ is also unique. Therefore, the proof is complete. \qed
\end{proof}

Finally, we obtain the orbits under the natural action of the group ${\aut}(AT(n,p))$ on   the set $V(AT(n,p))$. 
\begin{theorem}
The orbits under the natural action of the group ${\aut}(AT(n,p))$ on   the set $V(AT(n,p))$ are:
\begin{itemize}
\item [1.] if $p$ is odd
$$O^0_i = V^0_i \cup V^1_{n-i}, \ i \in \Big\lbrace 0, \ldots, \frac{p-1}{2} \Big\rbrace, $$
$$O^1_i = V^1_i \cup V^0_{n-i}, \ i \in \Big\lbrace 0, \ldots, \frac{p-1}{2}  \Big\rbrace. $$
\item [2.] if $p$ is even

$$O^0_i = V^0_i \cup V^1_{n-i}, \ i \in \Big\lbrace 0, \ldots, \frac{p-2}{2} \Big\rbrace, $$
$$O^1_i = V^1_i \cup V^0_{n-i}, \ i \in \Big\lbrace 0, \ldots, \frac{p-2}{2}  \Big\rbrace, $$
$$O_{\frac{p}{2}} = V^0_{\frac{p}{2}} \cup V^1_{\frac{p}{2}}.$$
\end{itemize}

\end{theorem}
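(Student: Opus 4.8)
The plan is to combine Lemma~\ref{le2} with a short analysis of degrees and distances in $AT(n,p)$, using throughout two automorphisms: the \emph{column rotation} $\rho$ given by $\rho(v^k_{i,j})=v^k_{i,j+1}$ (indices modulo $n$), which is evidently an automorphism — it rotates the cylinder, and is also the extension via Lemma~\ref{le2} of the rotation of $C_1$ by two positions — and the \emph{flip} $\phi$, the end-over-end symmetry of $AT(n,p)$, which by an argument as in the third step below sends $V^k_i$ to $V^{1-k}_{p-i}$ for every $i$ and every $k\in\{0,1\}$. Note that $\langle\rho\rangle\cong\mathbb{Z}_n$ acts transitively on each set $V^k_i$.

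First I would determine the degree-$2$ vertices of $AT(n,p)$: these are exactly the vertices of $V^0_0\cup V^1_p$, i.e.\ the ``outer'' vertices of the boundary rings $C_1$ and $C_2$, the ones not joined to an adjacent layer. Hence every $\alpha\in{\aut}(AT(n,p))$ permutes the set $V^0_0\cup V^1_p$. A further easy observation separates its two halves: any two vertices of $V^0_0$ have a common neighbour (in $V^1_0$), while a vertex of $V^0_0$ and a vertex of $V^1_p$ have no common neighbour (clear for $p\ge 2$, and checked directly for $p=1$). Therefore $\alpha$ either fixes both $V^0_0$ and $V^1_p$ setwise or interchanges them; and since $V^1_0$ is the neighbourhood of $V^0_0$ and $V^0_p$ that of $V^1_p$, this forces $\alpha(V(C_1))=V(C_1)$ in the first case and $\alpha(V(C_1))=V(C_2)$ in the second.

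Next comes the \emph{layer rigidity}. Put $\delta(x):=\min_{y\in V(C_1)}d_G(x,y)$. An easy induction on the layer index shows that $\delta$ is constant on each $V^k_i$, equals $0$ on $V^0_0\cup V^1_0$, and takes pairwise distinct values on the sets $V^0_i$ and $V^1_i$ for $1\le i\le p$. Consequently, if $\alpha(V(C_1))=V(C_1)$ then $\alpha$ preserves $\delta$, hence fixes each $V^k_i$ with $i\ge 1$ setwise, and by the previous step it also fixes $V^0_0$ and $V^1_0$; so $\alpha$ fixes every $V^k_i$. If instead $\alpha(V(C_1))=V(C_2)$, then $\phi^{-1}\alpha$ is of the first type, so $\alpha(V^k_i)=\phi(V^k_i)=V^{1-k}_{p-i}$ for all $i,k$. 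In every case $\alpha(V^0_i)\subseteq O^0_i:=V^0_i\cup V^1_{p-i}$ and $\alpha(V^1_i)\subseteq O^1_i:=V^1_i\cup V^0_{p-i}$, so the orbit of any vertex lies in one of these sets; conversely $\langle\rho\rangle$ is transitive on $V^0_i$ and on $V^1_{p-i}$ and $\phi$ interchanges them, so the orbit of a vertex of $V^0_i$ is exactly $O^0_i$, and likewise the orbit of a vertex of $V^1_i$ is exactly $O^1_i$.

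It remains to list the distinct orbits among the sets $O^0_i,O^1_i$ with $0\le i\le p$, which cover $V(AT(n,p))$. Since $O^0_i=O^1_{p-i}$ for every $i$, and $O^0_i=O^1_i$ precisely when $i=p-i$, i.e.\ when $p$ is even and $i=p/2$, one obtains exactly the two cases of the statement, with the indices running up to $\frac{p-1}{2}$, respectively $\frac{p-2}{2}$, and with the single additional orbit $O_{p/2}=V^0_{p/2}\cup V^1_{p/2}$ in the even case. I expect the real work to be in the first two steps: reading off from the armchair geometry which vertices have degree $2$ and what the values of $\delta$ are, and making sure the separating arguments (common neighbours, distinctness of $\delta$-values) do not degenerate for the smallest tubes; granting these structural facts, the theorem follows formally from Lemma~\ref{le2} and the two symmetries $\rho$ and $\phi$.
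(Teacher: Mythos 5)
Your overall skeleton (identify the degree-$2$ vertices, show every automorphism preserves or swaps the two boundary rings, use the distance-to-boundary invariant $\delta$ for layer rigidity, then get transitivity inside each candidate orbit from Lemma~\ref{le2}) is the same strategy as the paper's, and your silent correction of the statement's $n-i$ to $p-i$ is right. But two of your concrete supporting claims are false, and they carry the proof. First, the map $\rho(v^k_{i,j})=v^k_{i,j+1}$ is \emph{not} an automorphism of $AT(n,p)$. Around $C_1$ the vertices of $V^0_0$ and $V^1_0$ occur in adjacent pairs (pattern $0,0,1,1,\ldots$; this is precisely the armchair boundary, cf.\ Figure~\ref{cikel}, where the distances from $u$ to the other vertices of $V^0_0$ are $1,3,4,4,5,7,8$), because adjacent hexagon columns are offset by half a hexagon. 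Hence ``rotating the cylinder by one column'' is not a symmetry, and the rotation of $C_1$ by two positions sends degree-$2$ vertices to degree-$3$ vertices, so it cannot be the restriction of any automorphism; Lemma~\ref{le2} only applies (its proof needs $\overline{\varphi}(y)$ to have degree $3$) to isomorphisms that respect the type classes. The actual rotations are $v^k_{i,j}\mapsto v^k_{i,j+2}$, generating $\mathbb{Z}_{n/2}$ (consistent with the paper's conjecture ${\aut}(AT(n,p))\cong D_{\frac{n}{2}}\times\mathbb{Z}_2$ of order $2n$, not $\geq 4n$), and this subgroup has \emph{two} orbits on each $V^k_i$, so your transitivity claim collapses. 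To get transitivity on $V^k_i$ you must also invoke the pattern-preserving reflections of $C_1$ extended via Lemma~\ref{le2}, which is essentially how the paper argues.

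Second, your separation step rests on ``any two vertices of $V^0_0$ have a common neighbour,'' which is false for armchair tubes: every vertex of $V^0_0$ has degree $2$ with both neighbours on $C_1$, and no two type-$0$ positions on $C_1$ are at ring-distance $2$, so in fact \emph{no} two vertices of $V^0_0$ share a neighbour (the property you are quoting is the zigzag-boundary property, not the armchair one). So, as written, nothing prevents an automorphism from sending part of $V^0_0$ into $V^0_0$ and part into $V^1_p$, and the conclusion that $\alpha(V(C_1))\in\{V(C_1),V(C_2)\}$ is unsupported. This can be repaired, e.g.\ by noting that cyclically consecutive vertices of $V^0_0$ are at distance $1$ or $3$ while every vertex of $V^1_p$ is at distance $2p+1$ from $V^0_0$, which separates the two halves for $p\geq 2$ (with $p=1$ checked separately), or by characterizing $C_1\cup C_2$ as in Proposition~\ref{le1}; but the argument you actually give does not work. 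With these two points fixed, the remaining bookkeeping (layer rigidity via $\delta$, the identification $O^0_i=O^1_{p-i}$ and the extra middle orbit for $p$ even) is fine.
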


\begin{proof}
It follows from the proof of Lemma \ref{le2} that for any vertex $x$ of type $k$ in layer $i$, where $i \in \lbrace 0, \ldots, p \rbrace$, $k \in \lbrace 0,1 \rbrace$, and any vertex $y$ in layer $i$ of type $k$ or in layer $n-i$ of type $1-k$, there is an automorphism that maps $x$ to $y$. We notice that this also works when $p$ is even and $i = \frac{p}{2}$, which means that if $x \in V^0_{\frac{p}{2}}$ and $y \in V^1_{\frac{p}{2}}$, there is an automorphism that maps $x$ to $y$. 

Also, if $x$ is in layer $i$ and $y$ is in layer $j$, $j \neq i, j \neq n-i$, the distance from $x$ to $C_1$ or $C_2$, i.e. $\min \lbrace d(x,C_1),d(x,C_2) \rbrace$, can not be the same as the distance from $y$ to $C_1$ or $C_2$, i.e. $\min \lbrace d(y,C_1),d(y,C_2) \rbrace$. Therefore, there is no automorphism that maps $x$ to $y$. 

Moreover, if $x \in V^0_i$ and  $y \in V^1_i$ or  $y \in V^0_{n-i}$, where $i \in \lbrace 0, \ldots, p \rbrace$, $i \neq \frac{p}{2}$, then the numbers $\min \lbrace d(x,C_1),d(x,C_2) \rbrace$ and $\min \lbrace d(y,C_1),d(y,C_2) \rbrace$ can not be the same. Again, there is no automorphism that maps $x$ to $y$. 

Therefore, the proof is complete. \qed
\end{proof}

Lemma \ref{le2} claims that any isomorphism between subgraphs $C_1$ and $C_i$, where $i \in \lbrace 1,2 \rbrace$, can be extended to the automorphism of the graph $AT(n,p)$. In the next proposition we show the other direction.

\begin{proposition}
\label{le1}
Let $\varphi: V(AT(n,p)) \rightarrow V(AT(n,p))$ be an automorphism. Then the function $\varphi': V(C_1) \rightarrow \varphi(V(C_1))$, $\varphi'(x) =\varphi(x)$ for $x \in V(C_1)$, defines an automorphism of $C_1$ or an isomorphism from $C_1$ to $C_2$. 
\end{proposition}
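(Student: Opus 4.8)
The plan is to show that $\varphi(V(C_1))$ equals $V(C_1)$ or $V(C_2)$. Granting this, the statement follows immediately: $\varphi'$ is injective, and since $C_1$ and $C_i$ (for $i\in\{1,2\}$) are by definition the subgraphs of $AT(n,p)$ induced by $V(C_1)$ and $V(C_i)$, the map $\varphi'$ preserves adjacency and non-adjacency, hence is an isomorphism from $C_1$ onto $C_i$ --- an automorphism of $C_1$ when $i=1$, and an isomorphism from $C_1$ to $C_2$ when $i=2$.

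I would first record the structural facts about $AT(n,p)$ that are needed. Every vertex not lying on $C_1\cup C_2$ is an interior vertex of the tubulene and therefore has degree $3$; consequently all vertices of degree $2$ lie on $C_1$ or on $C_2$, and, since $\varphi$ preserves degrees, $\varphi$ maps degree-$2$ vertices to degree-$2$ vertices. Moreover, each of $C_1$ and $C_2$ is a cycle of length $2n$ along which the degrees $2$ and $3$ alternate; and because a degree-$2$ vertex has exactly two neighbours, namely its two neighbours along whichever cycle $C_j$ contains it, both neighbours of a degree-$2$ vertex lie on that same $C_j$.

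Now fix a vertex $v\in V(C_1)$ of degree $2$. Then $\varphi(v)$ has degree $2$, hence lies on $V(C_1)$ or on $V(C_2)$; assume first that $\varphi(v)\in V(C_1)$. List the vertices of $C_1$ in cyclic order as $x_0,x_1,\ldots,x_{2n-1}$ with $x_0=v$, so that, by the alternation of degrees, each $x_{2t}$ has degree $2$ and each $x_{2t+1}$ has degree $3$. I claim that $\varphi(x_t)\in V(C_1)$ for every $t$, and prove this by induction. The case $t=0$ is the assumption. Suppose $\varphi(x_t)\in V(C_1)$. If $x_t$ has degree $2$, then $\varphi(x_t)$ is a degree-$2$ vertex of $C_1$, so both of its neighbours lie on $C_1$; since $x_{t+1}$ is adjacent to $x_t$, the vertex $\varphi(x_{t+1})$ is one of those two neighbours and hence lies on $C_1$. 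If $x_t$ has degree $3$, then $x_{t+1}$ has degree $2$ and is adjacent to $x_t$, so $\varphi(x_{t+1})$ is a degree-$2$ vertex adjacent to $\varphi(x_t)\in V(C_1)$; both neighbours of $\varphi(x_{t+1})$ lie on the single cycle $C_j$ that contains $\varphi(x_{t+1})$, and since one of them lies in $V(C_1)$ we get $C_j=C_1$, so $\varphi(x_{t+1})\in V(C_1)$. This proves the claim, whence $\varphi(V(C_1))\subseteq V(C_1)$, and in fact $\varphi(V(C_1))=V(C_1)$ because $V(C_1)$ is finite and $\varphi$ is injective. If instead $\varphi(v)\in V(C_2)$, the same induction along $C_1$ shows that $\varphi$ maps every vertex of $C_1$ into $V(C_2)$, so that $\varphi(V(C_1))=V(C_2)$.

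The combinatorial core above is short; the part requiring care is the verification of the structural facts about armchair tubulenes --- that the two encircling boundary cycles have length $2n$ with strictly alternating degrees, and that every degree-$2$ vertex lies on one of them. These are clear from a picture of $AT(n,p)$ but should be read off from its explicit vertex coordinates, and the degenerate cases $n=2$ and $p=1$ (where the boundary cycles are short, respectively where there are no interior vertices) deserve a quick separate check that these facts persist.
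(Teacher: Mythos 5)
Your overall strategy (show $\varphi(V(C_1))\in\{V(C_1),V(C_2)\}$ using degree preservation, then conclude via induced subgraphs) is sound, but the structural fact your induction leans on is false for \emph{armchair} tubulenes. You assert that along $C_1$ the degrees $2$ and $3$ strictly alternate; that is the \emph{zigzag} boundary pattern. In $AT(n,p)$ the boundary cycle $C_1$ has the armchair pattern: the $n$ degree-$2$ vertices (those of $V^0_0$) and the $n$ degree-$3$ vertices (those of $V^1_0$) occur in \emph{adjacent pairs}, i.e.\ the degree sequence around the cycle is $2,2,3,3,2,2,3,3,\ldots$. You can read this off from the paper itself: in the computation of $d(u,V^0_0)$ the distance $1$ occurs (the term $\sum_i(1+4i)$), so two vertices of $V^0_0$ are adjacent, which is impossible under strict alternation. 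Because of this, your induction breaks exactly at the edges joining the two degree-$3$ vertices of a pair: in that step $x_t$ and $x_{t+1}$ both have degree $3$, so neither of your two cases applies. And the gap is real, not cosmetic: a degree-$3$ vertex of $C_1$ has a neighbour off $C_1$ (its layer-$1$ neighbour), which for $p\ge 2$ also has degree $3$, so knowing only that $\varphi(x_{t+1})$ has degree $3$ and is adjacent to $\varphi(x_t)\in V(C_1)$ does not force $\varphi(x_{t+1})\in V(C_1)$ --- the image could a priori ``escape'' into layer $1$ at that step.

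The argument can be repaired, e.g.\ by looking one step further at such an edge: the next vertex $x_{t+2}$ has degree $2$, so $\varphi(x_{t+2})$ has degree $2$, while (for $p\ge 2$) every neighbour of a layer-$1$ vertex has degree $3$; hence $\varphi(x_{t+1})$ cannot be the layer-$1$ neighbour of $\varphi(x_t)$, and it must be its other $C_1$-neighbour (the case $p=1$ then needs a separate check, since layer $1$ is $C_2$ there). For comparison, the paper avoids the vertex-by-vertex walk altogether: it observes that $C_1$ and $C_2$ are the only two cycles of length $2n$ in $AT(n,p)$ containing exactly $n$ vertices of degree $2$, a property preserved by any automorphism, so $\varphi$ maps $C_1$ onto $C_1$ or $C_2$ in one stroke. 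As written, though, your proof relies on a false description of the armchair boundary and does not go through without the additional argument above.
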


\begin{proof}
The graph $AT(n,p)$ contains exactly two disjoint cycles of length $2n$ with exactly $n$ vertices of degree 2 in the graph $AT(n,p)$. These two are $C_1$ and $C_2$. Therefore, automorphism $\varphi$ maps $C_1$ to either $C_1$ or $C_2$ and the proof is complete. \qed
\end{proof}

Hence, we obtain that all the automorphisms of graph $AT(n,p)$ can be obtained by finding all the automorphisms of subgraph $C_1$ and all the isomorphisms from subgraph $C_1$ to subgraph $C_2$. It is easy to see that the automorphism group of subgraph $C_1$ is isomorphic to the dihedral group $D_{\frac{n}{2}}$. Moreover, any isomorphism from $C_1$ to $C_2$ can be obtained as the composition of an automorphism of subgraph $C_1$ and a fixed isomorphism from $C_1$ to $C_2$. Therefore, we state the following conjecture.
  
\begin{conjecture}
Let $AT(n,p)$ be an armchair tubulene. The automorphism group of the graph $AT(n,p)$ is isomorphic to the direct product of the dihedral group $D_{\frac{n}{2}}$ and the cyclic group $\mathbb{Z}_2$.
\end{conjecture}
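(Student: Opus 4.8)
The plan is to use Lemma~\ref{le2} and Proposition~\ref{le1} to realize $A := \aut(AT(n,p))$ as the set of isomorphisms from $C_1$ onto $C_1$ or $C_2$, then to peel off an index‑$2$ normal subgroup isomorphic to $\aut(C_1)\cong D_{n/2}$, and finally to exhibit one explicit central involution that splits off the $\mathbb{Z}_2$ factor. Put $H = \{\alpha\in A : \alpha(V(C_1)) = V(C_1)\}$. By Proposition~\ref{le1} every $\alpha\in A$ either fixes $C_1$ setwise (hence also $C_2$) or interchanges $C_1$ and $C_2$, so $\varepsilon\colon A\to\mathbb{Z}_2$ defined by $\varepsilon(\alpha)=0$ in the first case and $\varepsilon(\alpha)=1$ in the second is a homomorphism with kernel $H$. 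Let $\tau$ be the automorphism induced by reflecting the embedded nanotube in the plane orthogonal to its axis through the midpoint; concretely $\tau$ swaps layer $i$ with layer $p-i$ and type $0$ with type $1$ while fixing the position index $j$, i.e. $\tau(v^k_{i,j}) = v^{1-k}_{p-i,j}$. It preserves adjacency (a short direct check), satisfies $\tau^2=\Id$, and interchanges $C_1$ (layer $0$) with $C_2$ (layer $p$), so $\varepsilon(\tau)=1$; hence $H\trianglelefteq A$ with $[A:H]=2$. Finally, the restriction map $H\to\aut(C_1)$, $\alpha\mapsto\alpha|_{V(C_1)}$, is a homomorphism which is injective by the uniqueness clause of Lemma~\ref{le2} and surjective because Lemma~\ref{le2} extends every $\varphi\in\aut(C_1)$ to some $\overline{\varphi}\in A$, which lies in $H$ since $\overline{\varphi}(V(C_1))=\varphi(V(C_1))=V(C_1)$; thus $H\cong\aut(C_1)$, and $\aut(C_1)\cong D_{n/2}$ as noted above.

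It remains to show that $A$ is the direct product $H\times\langle\tau\rangle$, for which it suffices to prove that $\tau$ is central in $A$, i.e. commutes with every $\alpha\in H$. This should come out of the construction in the proof of Lemma~\ref{le2}: an element of $H$ is obtained from a symmetry of $C_1$ — a rotation or a reflection of the circumference, which changes only the position index $j$, and possibly the type $k$, in a way that does not depend on the layer $i$ — by propagating it straight along the axis layer by layer, whereas $\tau$ reverses only the layer index $i$ together with the type $k$ and leaves $j$ untouched; since the two maps act on disjoint sets of coordinates, they commute.

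Granting the centrality of $\tau$, the conclusion is immediate: $\langle\tau\rangle\cap H=\{\Id\}$ because $\tau\notin H$ has order $2$, $\langle\tau\rangle\trianglelefteq A$ because $\tau$ is central, and $A=H\langle\tau\rangle$ because $[A:H]=2$; therefore $A$ is the internal direct product of $H$ and $\langle\tau\rangle$, so $\aut(AT(n,p))\cong D_{n/2}\times\mathbb{Z}_2$.

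Every step except the centrality of $\tau$ is a formal consequence of Lemma~\ref{le2} and Proposition~\ref{le1}, so that is where the real work lies. Proving that $\tau$ commutes with all of $H$ forces one to describe explicitly how an arbitrary element of $H$ acts on \emph{every} vertex of $AT(n,p)$ — not just on $C_1$ — which means carrying out the inductive, layer‑by‑layer extension of Lemma~\ref{le2} in enough detail to confirm that the circumferential rotations and reflections never become entangled with the axial direction. If that bookkeeping turns out to be awkward, an alternative is to compute the center $Z(A)$ directly and check that $Z(A)\not\subseteq H$, but I do not expect this route to be any easier.
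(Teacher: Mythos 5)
The statement you are proving is left in the paper as a conjecture with no proof, so your argument has to stand on its own. Its first half essentially does: $\varepsilon$ is a homomorphism, $H=\ker\varepsilon$ has index $2$, and restriction to $C_1$ identifies $H$ with a subgroup of the symmetries of the rim cycle. One caveat: ${\aut}(C_1)$ for the abstract $2n$-cycle is $D_{2n}$ of order $4n$, not $D_{n/2}$; what is true (and what the construction in Lemma \ref{le2} actually needs, since it uses that $\varphi$ sends the degree-$3$ rim vertices to degree-$3$ vertices) is that only the isomorphisms preserving the two degree classes extend, and these form a group of order $n$ isomorphic to $D_{n/2}$. With that reading, $H\cong D_{n/2}$ and $|{\aut}(AT(n,p))|=2n$ are fine.

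The fatal step is $\tau$. The map $v^k_{i,j}\mapsto v^{1-k}_{p-i,j}$ does \emph{not} preserve adjacency: in an armchair tubulene the columns of hexagons alternate between two heights, and the reflection in the horizontal mid-plane interchanges the two families of columns, so it becomes a symmetry only after composing with a rotation by an odd number of columns (a shift of the circumferential index) or with a reflection through a plane passing between columns. Concretely, writing layer $i$ as a $2n$-cycle with positions $j\in\mathbb{Z}_{2n}$, type $0$ at $j\equiv 0,1$ and type $1$ at $j\equiv 2,3 \pmod 4$, and inter-layer edges $(i,4s+2)\sim(i+1,4s+1)$, $(i,4s+3)\sim(i+1,4s+4)$, the layer-reversing automorphisms are exactly $(i,j)\mapsto(p-i,\,j+2+4s)$ and $(i,j)\mapsto(p-i,\,c-j)$ with $c\equiv 3\pmod 4$. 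The element $\theta:(i,j)\mapsto(p-i,j+2)$ satisfies $\theta^2=$ rotation by $4$, hence has order $n$, and the reflections conjugate the rotations of $H$ to their inverses; so ${\aut}(AT(n,p))\cong D_n$, and for $4\mid n$ its unique central involution lies inside $H$, so no central complement exists. In fact for $4\mid n$ the conclusion itself is false: $D_n$ contains an element of order $n$, while every element of $D_{n/2}\times\mathbb{Z}_2$ has order dividing $n/2$ (already for $n=8$ one has an automorphism of order $8$, impossible in $D_4\times\mathbb{Z}_2$). The conjectured isomorphism holds exactly when $n\equiv 2\pmod 4$, where $D_n\cong D_{n/2}\times\mathbb{Z}_2$. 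So the centrality of $\tau$, which you correctly single out as the real work, is not a bookkeeping gap but is unprovable in general, and the ``short direct check'' that $\tau$ preserves adjacency is where the construction first breaks; your fallback plan of computing the center directly would have exposed this.
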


\section{The Graovac-Pisanski index of armchair tubulenes}

In this section, we calculate the Graovac-Pisanski index of armchair tubulenes. We have to consider the following four cases. The first part is explained in details, while for the remaining cases only the important results are given. We always denote by $u$ an arbitrary element of $V^0_0$ and by $v$ an arbitrary element of $V^1_0$.

\begin{enumerate}
\item $p$ is even and $4 \, | \, n$ \\
It is enough to compute $W(O^0_0)$ and $W(O^1_0)$ since, for example, $W(O^0_1)$ of the graph $AT(n,p)$ is exactly $W(O^0_0)$ of the graph $AT(n,p-2)$ (the graph $AT(n,p-2)$ is a convex subgraph of the graph $AT(n,p)$). Beside that, we need to calculate $W(O_{\frac{p}{2}})$. Since the graph induced on the vertices in $O_{\frac{p}{2}}$ is an isometric cycle of length $2n$, we have $W(O_{\frac{p}{2}})=n^3$. 

Next, we need to calculate $d(u,V^0_0)$ and therefore, we consider distances between some vertices on the cycle of length $2n$, see Figure \ref{cikel}. Note that the thick vertices represent the vertices in set $V^0_0$. Therefore,

\begin{eqnarray*}
d(u,V^0_0) & = & \sum_{i=0}^{\frac{n}{4}-1}(3+4i) + \sum_{i=0}^{\frac{n}{4}-1}(4+4i) + \sum_{i=0}^{\frac{n}{4}-1}(1+4i) + \sum_{i=0}^{\frac{n}{4}-2}(4+4i) \\
& = & \frac{n^2}{2}.
\end{eqnarray*}

\begin{figure}[h!] 
\begin{center}
\includegraphics[scale=0.7]{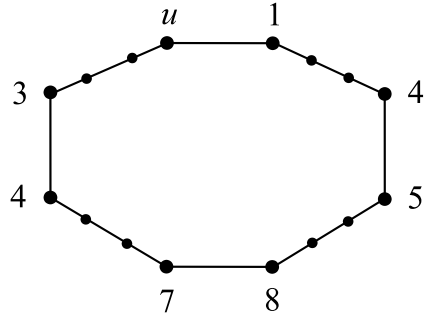}
\end{center}
\caption{\label{cikel} Subgraph $C_1$ when $n=8$ with the distances from vertices in $V^0_0$ to $u$.}
\end{figure}

Obviously, $d(v,V^1_0) = d(u,V^0_0) = \frac{n^2}{2}$. To determine $d(u,V^1_p)$ and $W(O^0_0)$, we consider two cases.

\smallskip

\begin{enumerate}

\item $n \leq 4p + 4$ \\
In this case, we can draw two lines $a$ and $b$, see Figure \ref{razdalje2}. All $n$ vertices of $V^1_p$ are between lines $a$ and $b$ or near lines $a$ and $b$ (at most $4$ vertices).

\begin{figure}[h]
	\centering
		\includegraphics[scale=0.7, trim=0cm 0cm 1cm 0cm]{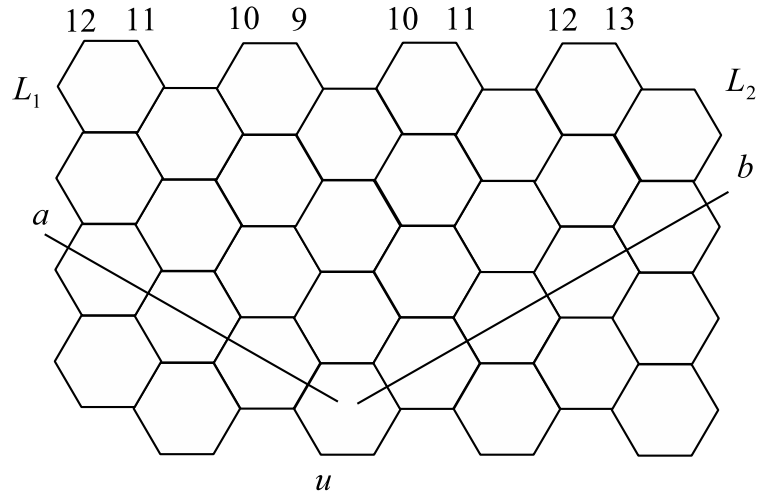}
\caption{Distances from $u$ in $AT(8,4)$. Curves $L_1$ and $L_2$ are joined.}
	\label{razdalje2}
\end{figure}

It is easy to observe that a shortest path from vertex $u$ to some vertex $x \in V^1_p$ can be obtained by joining a path following line $a$ or line $b$ and a vertical path. Therefore, the distance from $u$ to the vertex directly above $u$ equals $2p+1$ and the distance increases by $1$ for every next vertex in $V^1_p$ (in both directions). For an example see Figure \ref{razdalje2}. Hence, we get
 
\begin{eqnarray*}
d(u,V^1_p) & = & (2p+1) + 2 \sum_{i=1}^{\frac{n-2}{2}}(2p+1+i) + \Big(2p+1 + \frac{n}{2}\Big) \\
& = & \frac{n^2}{4} + 2np + n.
\end{eqnarray*}

Therefore,
$$d(u,O^0_0) = d(u,V^0_0) + d(u,V^1_p) = \frac{3n^2}{4} + 2np + n$$
and since every vertex in $O^0_0$ has equivalent position, we deduce

\begin{eqnarray*}
W(O^0_0) & = & \frac{1}{2} \cdot |O^0_0| \cdot d(u,O^0_0) =  \frac{2n}{2}\Big(\frac{3n^2}{4} + 2np + n\Big) \\
& = & n\Big(\frac{3n^2}{4} + 2np + n\Big).
\end{eqnarray*}

\bigskip

\item  $n > 4p + 4$\\
In this case, we also draw two lines $a$ and $b$ as before. There are exactly $4p$ vertices of $V^1_p$ between lines $a$ and $b$, exactly $4$ vertices ($2$ on each side) of $V^1_p$  near lines $a$ and $b$, and $n - 4p - 4$ other vertices. See Figure \ref{razdalje1}.

\begin{figure}[h]
	\centering
		\includegraphics[scale=0.7, trim=0cm 0cm 1cm 0cm]{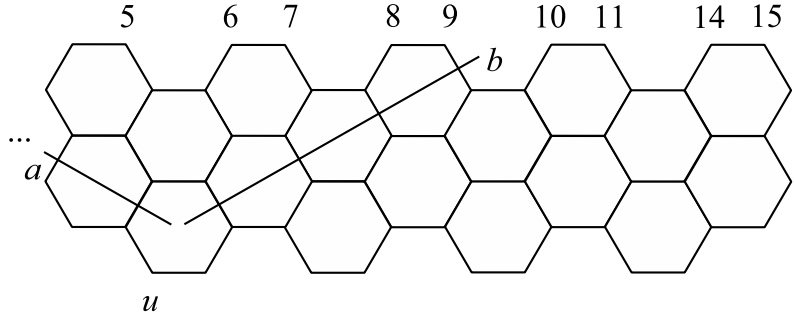}
\caption{Distances from $u$ in one part of an armchair tubulene.}
	\label{razdalje1}
\end{figure}

We can notice that the distance from $u$ to the vertex directly above $u$ is $2p+1$ and that the distance from $u$ increases by $1$ (in both directions) for every next vertex among other $4p+3$ vertices that are between or near lines $a$ and $b$. Afterwards, for the rest $n-4p-4$ vertices the increase of the distance from $u$ alternates between $3$ and $1$ in both directions. Therefore, we get

\begin{eqnarray*}
d(u,V^1_p) & = & (2p+1) + 2 \sum_{i=1}^{\frac{4p+2}{2}}(2p+1+i) + (2p+1 + 2p+2) \\
& + & \sum_{i=0}^{\frac{n-4p-8}{4}}(4p+5 + 4i) + \sum_{i=0}^{\frac{n-4p-8}{4}}(4p+6 + 4i) \\
&+ & \sum_{i=0}^{\frac{n-4p-8}{4}}(4p+6 + 4i) + \sum_{i=0}^{\frac{n-4p-8}{4}}(4p+7 + 4i) \\
& = & \frac{n^2}{2} + p(4p+4).
\end{eqnarray*}

Consequently,
$$d(u,O^0_0) = d(u,V^0_0) + d(u,V^1_p) =n^2 + 4p^2 + 4p$$
and since every vertex in $O^0_0$ has equivalent position, we obtain

\begin{eqnarray*}
W(O^0_0) & = & \frac{1}{2} \cdot |O^0_0| \cdot d(u,O^0_0) =  \frac{2n}{2}(n^2 + 4p^2 + 4p) \\
& = & n(n^2 + 4p^2 + 4p).
\end{eqnarray*}
\end{enumerate}
\bigskip

%


\noindent To compute $W(O^1_0)$, we also consider two cases.
\smallskip

\begin{enumerate}
\item $n \leq 4p$ \\
Similar as before, we can draw two lines $a$ and $b$ as shown in Figure \ref{razdalje3}. All $n$ vertices of $V^0_p$ are between lines $a$ and $b$ or near the lines $a$ and $b$ (at most $4$ vertices).

\begin{figure}[h]
	\centering
		\includegraphics[scale=0.7, trim=0cm 0cm 1cm 0cm]{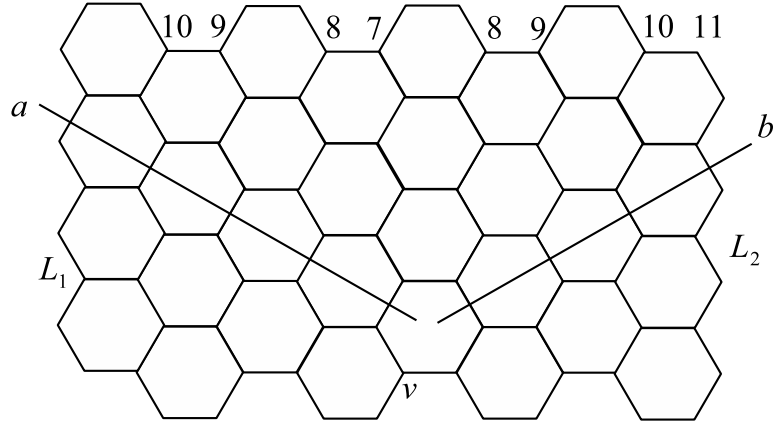}
\caption{Distances from $v$ in $AT(8,4)$. Curves $L_1$ and $L_2$ are joined.}
	\label{razdalje3}
\end{figure}

It is easy to observe that the distance from vertex $v$ to the vertex directly above $v$ equals $2p-1$ and that the distance increases by $1$ for every next vertex in $V^0_p$ (in both directions). For an example see Figure \ref{razdalje3}. Hence, we get
 
\begin{eqnarray*}
d(v,V^0_p) & = & (2p-1) + 2 \sum_{i=1}^{\frac{n-2}{2}}(2p-1+i) + \Big(2p-1 + \frac{n}{2}\Big) \\
& = & \frac{n^2}{4} + 2np - n.
\end{eqnarray*}

Therefore,
$$d(v,O^1_0) = d(v,V^1_0) + d(v,V^0_p) = \frac{3n^2}{4} + 2np - n$$
and since every vertex in $O^1_0$ has equivalent position, we deduce

\begin{eqnarray*}
W(O^1_0) & = & \frac{1}{2} \cdot |O^1_0| \cdot d(v,O^1_0) =  \frac{2n}{2}\Big(\frac{3n^2}{4} + 2np - n\Big) \\
& = & n\Big(\frac{3n^2}{4} + 2np - n\Big).
\end{eqnarray*}

\bigskip

\item $n > 4p$ \\
In this case, we also draw two lines $a$ and $b$ as in the previous case. There are exactly $4p-4$ vertices of $V^0_p$ between lines $a$ and $b$, exactly $4$ vertices ($2$ on each side) of $V^0_p$  near lines $a$ and $b$, and $n - 4p$ other vertices. 

We can notice that the distance from $v$ to the vertex directly above $v$ is $2p-1$ and that the distance from $v$ increases by $1$ (in both directions) for every next vertex among other $4p-1$ vertices that are between or near lines $a$ and $b$. Afterwards, for the rest $n-4p$ vertices the increase of the distance from $v$ alternates between $3$ and $1$ in both directions. Therefore, we get

\begin{eqnarray*}
d(v,V^0_p) & = & (2p-1) + 2 \sum_{i=1}^{\frac{4p-2}{2}}(2p-1+i) + (2p-1 + 2p) \\
& + & \sum_{i=0}^{\frac{n-4p-4}{4}}(4p+1 + 4i) + \sum_{i=0}^{\frac{n-4p-4}{4}}(4p+2 + 4i) \\
&+ & \sum_{i=0}^{\frac{n-4p-4}{4}}(4p+2 + 4i) + \sum_{i=0}^{\frac{n-4p-4}{4}}(4p+3 + 4i) \\
& = & \frac{n^2}{2} + p(4p-4).
\end{eqnarray*}

Consequently,
$$d(v,O^1_0) = d(v,V^1_0) + d(v,V^0_p) =n^2 + 4p^2 - 4p$$
and since every vertex in $O^1_0$ has equivalent position, we get

\begin{eqnarray*}
W(O^1_0) & = & \frac{1}{2} \cdot |O^1_0| \cdot d(v,O^1_0) =  \frac{2n}{2}(n^2 + 4p^2 - 4p) \\
& = & n(n^2 + 4p^2 - 4p).
\end{eqnarray*}
\end{enumerate}
\bigskip

\noindent Putting all the results together, we obtain Table \ref{tabela1}.
%
%

\begin{center}
\begin{table}[H] \renewcommand{\arraystretch}{1.5}
\begin{tabular}{|c||c|c|}

\hline 
& $n \leq 4p+4$ & $n > 4p+4$ \\
\hline \hline
$d(u,V^0_0)$ & $\frac{n^2}{2}$ & $\frac{n^2}{2}$  \\ 
 \hline 
$d(u,V^1_p)$ & $\frac{n^2}{4} + 2np + n$ & $\frac{n^2}{2} + 4p^2 + 4p$  \\ 
\hline 
$d(u,O^0_0)$ & $\frac{3n^2}{4} + 2np + n$ & $n^2 + 4p^2 + 4p$  \\ 
\hline 
$W(O^0_0)$ & $n\Big(\frac{3n^2}{4} + 2np + n\Big)$ & $n(n^2 + 4p^2 + 4p)$  \\ 
\hline \hline
& $n \leq 4p$ & $n > 4p$ \\
\hline \hline
 $d(v,V^1_0)$ & $\frac{n^2}{2}$  & $\frac{n^2}{2}$  \\ 
\hline 
$d(v,V^0_p)$ & $\frac{n^2}{4} + 2np - n$   & $\frac{n^2}{2} + 4p^2-4p$  \\ 
\hline 
$d(v,O^1_0)$ & $\frac{3n^2}{4} + 2np - n$  & $n^2 + 4p^2-4p$ \\ 
\hline 
$W(O^1_0)$ & $n\Big(\frac{3n^2}{4} + 2np - n\Big)$ & $n(n^2 + 4p^2 - 4p)$  \\
\hline 
\end{tabular} 
\caption{\label{tabela1} Distances in $AT(n,p)$ with $p$ even and $4 \, | \, n$.}
\end{table}
\end{center}

To compute $\widehat{W}(AT(n,p))$, we use Formula \ref{formula}. First define the following functions:
$$\begin{array}{rcl}
f_1(p) & = &  n\Big(\frac{3n^2}{4} + 2np + n\Big), \\
f_2(p) & = &  n(n^2 + 4p^2 + 4p), \\
g_1(p) & = & n\Big(\frac{3n^2}{4} + 2np - n\Big), \\
g_2(p) & = & n(n^2 + 4p^2 - 4p). \\

\end{array}$$

\noindent One can easily notice that $W(O^0_i)=f_1(p-2i)$ if $n \leq 4(p-2i)+4 = 4p-8i+4$ and $W(O^0_i)=f_2(p-2i)$ if $n > 4p-8i+4 $ (and similar can be done for $W(O^1_i)$). Now consider the following four cases. 
\begin{itemize}
\item [(a)] $n > 4p+4$ \\
\noindent It follows
$$ {W'}(AT(n,p))  =  W(O_{\frac{p}{2}}) + \sum_{i=1}^{\frac{p}{2}}f_2(2i) + \sum_{i=1}^{\frac{p}{2}}g_2(2i).$$

\item [(b)] $n = 4p+4$ \\
\noindent For $p \geq 4$ it follows
$$ {W'}(AT(n,p))  = W(O_{\frac{p}{2}}) + \sum_{i=1}^{\frac{p-2}{2}}f_2(2i) + f_1(p) + \sum_{i=1}^{\frac{p}{2}}g_2(2i).$$
The case $p=2$ can be easily computed in a similar way.

\item [(c)] $n \leq 4p$ and $8 \, | \, n$ \\
\noindent For $n \geq 16$ it follows
\begin{eqnarray*} 
{W'}(AT(n,p))  & = & W(O_{\frac{p}{2}}) + \sum_{i=1}^{\frac{n-8}{8}}f_2(2i) + \sum_{i=\frac{n}{8}}^{\frac{p}{2}}f_1(2i) \\
& + & \sum_{i=1}^{\frac{n-8}{8}}g_2(2i) + \sum_{i=\frac{n}{8}}^{\frac{p}{2}}g_1(2i).
\end{eqnarray*}
The case $n=8$ can be easily computed in a similar way.

\item [(d)] $n \leq 4p$ and $8 \, | \, (n-4)$ \\
\noindent For $n\geq 20$ it follows
\begin{eqnarray*} 
{W'}(AT(n,p))  & = & W(O_{\frac{p}{2}}) +  \sum_{i=1}^{\frac{n-12}{8}}f_2(2i) + \sum_{i=\frac{n-4}{8}}^{\frac{p}{2}}f_1(2i) \\
& + & \sum_{i=1}^{\frac{n-4}{8}}g_2(2i) + \sum_{i=\frac{n+4}{8}}^{\frac{p}{2}}g_1(2i).
\end{eqnarray*}
The cases $n=12$ or $n=4$ can be easily computed in a similar way.

\end{itemize}
To compute all the sums from the previous cases, we use a computer program. Since $|V(AT(n,p))| = 2n(p+1)$ and the cardinality of any orbit of $AT(n,p)$ is $2n$, it is easy to see that $\widehat{W}(AT(n,P)) = (p+1)W'(AT(n,P))$. The results are presented in the first part of Table \ref{tabela5}.
\bigskip

\item $p$ is even and $4 \, | \, (n-2)$ \\
All the details are similar to the case 1. Therefore, the important results are presented in Table \ref{tabela2}. We also have $W(O_{\frac{p}{2}})=n^3$. The values of the Graovac-Pisanski index in this case are shown in the second part of Table \ref{tabela5}.

\begin{center}
\begin{table}[H] \renewcommand{\arraystretch}{1.5}
\begin{tabular}{|c||c|c|}

\hline 
& $n \leq 4p+4$ & $n > 4p+4$ \\
\hline \hline
$d(u,V^0_0)$ & $\frac{n^2-2}{2}$ & $\frac{n^2-2}{2}$  \\ 
 \hline 
$d(u,V^1_p)$ & $\frac{n^2}{4} + 2np + n$ & $\frac{n^2}{2} + 4p^2 + 4p + 1$  \\ 
\hline 
$d(u,O^0_0)$ & $\frac{3n^2}{4} + 2np + n - 1$ & $n^2 + 4p^2 + 4p$  \\ 
\hline 
$W(O^0_0)$ & $n\Big(\frac{3n^2}{4} + 2np + n - 1\Big)$ & $n(n^2 + 4p^2 + 4p)$  \\ 
\hline \hline
& $n \leq 4p$ & $n > 4p$ \\
\hline \hline
 $d(v,V^1_0)$ & $\frac{n^2-2}{2}$  & $\frac{n^2-2}{2}$  \\ 
\hline 
$d(v,V^0_p)$ & $\frac{n^2}{4} + 2np - n$   & $\frac{n^2}{2} + 4p^2-4p + 1$  \\ 
\hline 
$d(v,O^1_0)$ & $\frac{3n^2}{4} + 2np - n - 1$  & $n^2 + 4p^2-4p$ \\ 
\hline 
$W(O^1_0)$ & $n\Big(\frac{3n^2}{4} + 2np - n - 1\Big)$ & $n(n^2 + 4p^2 - 4p)$  \\
\hline 
\end{tabular} 
\caption{\label{tabela2} Distances in $AT(n,p)$ with $p$ even and $4 \, | \, (n-2)$.}
\end{table}
\end{center}

\item $p$ is odd and $4 \, | \, n$ \\
All the details are similar to the case 1. It turns out that the distances are the same as for even $p$. Therefore, we can consider  Table \ref{tabela1}. The values of the Graovac-Pisanski index in this case are shown in the third part of Table \ref{tabela5}.

\item $p$ is odd and $4 \, | \, (n-2)$ \\
All the details are similar to the case 1. As above it turns out that the distances are the same as for even $p$. Therefore, we can consider  Table \ref{tabela2}. The values of the Graovac-Pisanski index in this case are shown in the last part of Table \ref{tabela5}.

\end{enumerate}

\noindent
Finally, the results for the Graovac-Pisanski index of $AT(n,p)$ are shown in Table \ref{tabela5}. The results for some small cases are omitted.

\begin{center}
\begin{table}[H] \renewcommand{\arraystretch}{1.5}
\begin{tabular}{|c||c|}

\hline 
$p$ even and $4 \, | \, n$ & \\
\hline 
$n > 4p+4$ & $(p+1) \big(n^3p + n^3 + \frac{4np^3}{3} + 4np^2 + \frac{8np}{3} \big)$   \\ 
 \hline 
$n=4p+4, p \geq 4$ & $(p+1) \big(n^3p + \frac{3n^3}{4} + 2n^2p + n^2 + \frac{4np^3}{3} - \frac{4np}{3} \big)$   \\ 
\hline 
$n \leq 4p$, $n \geq 16$ & $(p+1) \big(\frac{n^4}{48} + \frac{3n^3p}{4} + \frac{3n^3}{4} + n^2p^2 + 2n^2p + \frac{2n^2}{3} \big)$  \\ 

\hline \hline
$p$ even and $4 \, | \, (n-2)$ & \\
\hline 
$n > 4p+4$ & $(p+1) \big(n^3p + n^3 + \frac{4np^3}{3} + 4np^2 + \frac{8np}{3} \big)$   \\ 
 \hline 
$n=4p+2, p \geq 4$ & $(p+1) \big(n^3p + \frac{3n^3}{4} + 2n^2p + n^2 + \frac{4np^3}{3} - \frac{4np}{3} - n \big)$   \\ 
\hline 
$n \leq 4p$, $n \geq 14$ & $(p+1) \big(\frac{n^4}{48} + \frac{3n^3p}{4} + \frac{3n^3}{4} + n^2p^2 + 2n^2p + \frac{11n^2}{12} - np - n \big)$  \\ 

\hline \hline
$p$ odd and $4 \, | \, n$ & \\
\hline 
$n > 4p+4$ &   $(p+1) \big(n^3p + n^3 + \frac{4np^3}{3} + 4np^2 + \frac{8np}{3} \big)$  \\ 
 \hline 
$n=4p+4, p \geq 3$ & $(p+1) \big(n^3p + \frac{3n^3}{4} + 2n^2p + n^2 + \frac{4np^3}{3} - \frac{4np}{3} \big)$    \\ 
\hline 
$n \leq 4p$, $n \geq 12$ & $(p+1) \big(\frac{n^4}{48} + \frac{3n^3p}{4} + \frac{3n^3}{4} + n^2p^2 + 2n^2p + \frac{2n^2}{3} \big)$  \\ 

\hline \hline
$p$ odd and $4 \, | \, (n-2)$ & \\
\hline 
$n > 4p+4$ &   $(p+1) \big(n^3p + n^3 + \frac{4np^3}{3} + 4np^2 + \frac{8np}{3} \big)$  \\ 
 \hline 
$n=4p+2, p \geq 3$ & $(p+1) \big(n^3p + \frac{3n^3}{4} + 2n^2p + n^2 + \frac{4np^3}{3} - \frac{4np}{3} - n \big)$  \\ 
\hline 
$n \leq 4p$, $n \geq 10$ & $(p+1) \big(\frac{n^4}{48} + \frac{3n^3p}{4} + \frac{3n^3}{4} + n^2p^2 + 2n^2p + \frac{11n^2}{12} - np - n \big)$ \\ 
\hline

\end{tabular} 
\caption{\label{tabela5} Closed formulas for $\widehat{W}(AT(n,p))$.}
\end{table}
\end{center}

\section*{Acknowledgment} 

\noindent The author Petra \v Zigert Pleter\v sek acknowledge the financial support from the Slovenian Research Agency (research core funding No. P1-0297). 

\noindent The author Niko Tratnik was financially supported by the Slovenian Research Agency.

\end{document}